\setlist{leftmargin=*,
         itemsep=0.5\itemsep,
         parsep=0.5\parsep,
         topsep=0.5\topsep,
         partopsep=0.5\partopsep}
\setlist[enumerate, 1]{label=(\arabic*), ref=(\arabic*)}
\newlist{proofsteps}{enumerate}{1}
\setlist[proofsteps]{%
  label={\bfseries Step~\arabic{proofstepsi}.\, },
  wide=0pt,
  listparindent=\parindent,
  itemindent=\parindent,
}
\author{Martins Bruveris}
\title{The $L^2$-metric on $C^\infty(M,N)$}
\def\signed #1{{\leavevmode\unskip\nobreak\hfil\penalty50\hskip2em
  \hbox{}\nobreak\hfil#1%
  \parfillskip=0pt \finalhyphendemerits=0 \endgraf}}
\newsavebox\mybox
\theoremstyle{plain}
\newtheorem{theorem}{Theorem}[section]
\newaliascnt{proposition}{theorem}
\newtheorem{proposition}[proposition]{Proposition}
\newaliascnt{lemma}{theorem}
\newtheorem{lemma}[lemma]{Lemma}
\newaliascnt{corollary}{theorem}
\theoremstyle{remark}
\newaliascnt{remark}{theorem}
\newaliascnt{example}{theorem}
\newaliascnt{problem}{theorem}
\newaliascnt{question}{theorem}
\newtheorem{question}[question]{Question}
\newaliascnt{openquestion}{theorem}
\theoremstyle{definition}
\newaliascnt{definition}{theorem}
\newaliascnt{project}{theorem}
\let\oldautoref\autoref
\renewcommand\autoref[1]{\@first@ref#1,@}
\def\@throw@dot#1.#2@{#1}
\def\@set@refname#1{
    \edef\@tmp{\getrefbykeydefault{#1}{anchor}{}}%
    \def\@refname{\@nameuse{\expandafter\@throw@dot\@tmp.@autorefname}s}%
}
\def\@first@ref#1,#2{%
  \ifx#2@\oldautoref{#1}\let\@secondref\@gobble
  \else%
    \@set@refname{#1}
    \@refname~\ref{#1}
    \let\@secondref\@second@ref
  \fi%
  \@secondref#2%
}
\def\@second@ref#1,#2{%
  \ifx#2@ and~\ref{#1}\let\@nextref\@gobble
  \else, \ref{#1}
    \let\@nextref\@next@ref
  \fi%
  \@nextref#2%
}
\def\@next@ref#1,#2{%
   \ifx#2@, and~\ref{#1}\let\@nextref\@gobble
   \else, \ref{#1}
   \fi%
   \@nextref#2%
}
\let\oldtheequation\theequation
\def\tagform@#1{\maketag@@@{\ignorespaces#1\unskip\@@italiccorr}}
\renewcommand{\theequation}{(\oldtheequation)}
\def\ep{\varepsilon}
\def\ka{\kappa}
\def\rh{\varrho}
\def\ph{\varphi}
\def\om{\omega}
\def\Ga{\Gamma}
\def\Om{\Omega}
\def\inv{^{-1}}
\def\x{\times}
\def\p{\partial}
\def\exp{\operatorname{exp}}
\let\on=\operatorname
\let\mb=\mathbb
\let\mc=\mathcal
\let\mf=\mathfrak
\newcommand{\N}{{\mathbb{N}}}
\newcommand{\R}{{\mathbb{R}}}
\newcommand{\ud}{\,\mathrm{d}}
\newcommand{\dd}{\mathrm{d}}
\tikzset{
  every node/.style = {font = \footnotesize}
}
\tikzset{
    >={Stealth[length=2mm]}
}
\newcommand{\drawxlabels}[2][fill]{
  \foreach \x/\xtext in {#2} {
    \ifthenelse{\equal{#1}{fill}}
    {
      \draw[shift={(\x,0)}] (0pt,{2.5pt/\scale}) -- (0pt,{-2.5pt/\scale}) 
        node[below,fill=white] {$\xtext$};
    }
    {
      \draw[shift={(\x,0)}] (0pt,{2.5pt/\scale}) -- (0pt,{-2.5pt/\scale}) 
        node[below] {$\xtext$};
    }
  }
}
\newcommand{\drawylabels}[2][fill]{
  \foreach \y/\ytext in {#2} {
    \ifthenelse{\equal{#1}{fill}}
    {
      \draw[shift={(0,\y)}] ({2.5pt/\scale},0pt) -- ({-2.5pt/\scale},0pt) 
        node[left,fill=white] {$\ytext$};
    }
    {
      \draw[shift={(0,\y)}] ({2.5pt/\scale},0pt) -- ({-2.5pt/\scale},0pt) 
        node[left] {$\ytext$};
    }
  }
}
\begin{document}

\maketitle

\begin{abstract}
Let $M$, $N$ be finite-dimensional manifolds with $M$ compact. This paper looks at the Riemnannian geometry on the space $C^\infty(M,N)$ of smooth maps equipped with the $L^2$-Riemannian metric. This metric was used by Ebin and Marsden in the proof of the well-posedness of the incompressible Euler equation and is related to the Wasserstein distance in optimal transport. The paper gives an introduction to the challenges of infinite-dimensional Riemannian geometry and shows how one use general connections to relate the geometry of $N$ and the geometry of $C^\infty(M,N)$.
\end{abstract}


\section{Introduction}

Let $M$ be an orientable compact manifold without boundary of dimension $m$ with volume form $\mu \in \Om^m(M)$ and $(N,g)$ a Riemannian manifold. We assume $N$ is finite-dimensional although it does not have to be compact or complete. The purpose of this note is to discuss the Riemannian geometry of the $L^2$-metric on the space $C^\infty(M,N)$ of smooth maps, defined by
\begin{equation*}
G_q(h, k) = \int_M g_{q(x)}(h(x), k(x)) \,\mu(x)\,.
\end{equation*}
When talking about spaces of maps one has to choose a regularity class and the choice $C^\infty(M,N)$ is one among many. It is not entirely arbitrary, because the space $C^\infty(M,N)$ allows us to use the framework of convenient calculus with its wealth of permitted geometric constructions. Afterwards we will show how to extend the results to Sobolev spaces $H^s(M,N)$ with $s > \dim N / 2$ and to classical $C^k$-spaces $C^k(M,N)$ with $k \in \N$.

\subsection{Applications}
The $L^2$-metric was first used by Ebin and Marsden~\cite{Ebin1970} in the proof of the well-posedness of the incompressible Euler equation. The $L^2$-metric on the whole diffeomorphism group $\on{Diff}(M)$ is not right-invariant, but it is right-invariant when restricted to the subgroup $\on{Diff}_\mu(M)$ of diffeomorphisms preserving the volume form.

A second application is optimal transport. Assume $\mu$ has total volume $1$. We consider the push-forward map
\[
\pi : \on{Diff}(M) \to \on{Vol}_1(M)\,,\quad \ph \mapsto \ph_\ast \mu\,,
\]
from diffeomorphisms onto volume forms with volume $1$. Then $\pi$ is a Riemnnian submersion from $\on{Diff}(M)$ with the $L^2$-metric onto the space of volume forms with a suitable Riemannian metric. More importantly, the geodesic distance of this metric on the space of volume forms is the Wasserstein distance. This was first discovered by Otto~\cite{Otto2001}, a recent overview article is~\cite{Modin2017}.

\subsection{Is this really infinite-dimensional?}

The Riemannian geometry of the $L^2$-metric on $C^\infty(M,N)$ is intimately connected with the Riemannian geometry of $(N,g)$. This can be seen from the following informal derivation of the geodesic equation. The energy of a path $q : [0,1] \to C^\infty(M,N)$ is
\[
E(q) = \frac 12 \int_0^1 \int_M g_{q(t,x)}\big(\p_t q(t,x), \p_t q(t,x)\big) \, \mu(x) \ud t\,.
\]
Using convenient calculus, the following maps can be identified witch each other,
\begin{align*}
q &: (-\ep,\ep) \to C^\infty([0,1], C^\infty(M,N)) \\
q^\wedge &: (-\ep,\ep) \x [0,1] \to C^\infty(M,N) \\
q^{\wedge\wedge} &: (-\ep,\ep) \x [0,1] \x M \to N\,,
\end{align*}
and all three are smooth maps between the respective spaces; this is the exponential law for convenient vector spaces,~\cite[Section~3]{Kriegl1997}. Note that the first map is a one-parameter variation of a smooth path, while the third map is a smooth map between finite-dimensional spaces. We will use $q$ to refer to all three of them. Hence, computing the variation $\p_{\ep}|_{0} E$ of the energy reduces to computing, for each $x \in M$, the variation of the Riemannian energy on $(N,g)$.
\begin{align*}
\p_\ep \big(E(q)\big) &= \frac 12 \int_0^1 \int_M \p_\ep
\left( g_{q(t,x)}\big(\p_t q(t,x), \p_t q(t,x)\big) \right) \, \mu(x) \ud t \\
&= \int_0^1 \int_M
g_{q(t,x)}\big( \nabla_{\p_\ep} \p_t q(t,x), \p_t q(t,x)\big) \, \mu(x) \ud t \\
&= \int_0^1 \int_M
- g_{q(t,x)}\big( \p_\ep q(t,x), \nabla_{\p_t} \p_t q(t,x) \big) \, \mu(x) \ud t\,.
\end{align*}
This shows that $L^2$-geodesics satisfy the equation
\[
\nabla^g_{\p_t} \p_t q(t,x) = 0\,;
\]
in other words, $q(\cdot,x)$ is a geodesic on $(N,g)$ for each $x \in M$.

Why did we call this derivation `informal'? What we did compute is an equation that every $L^2$-geodesic has to satisfy. However, the `geodesic equation' on a Riemannian manifold is the equation
\[
\nabla_{\p_t} \p_t q = 0\,,
\]
with $\nabla$ being its Levi-Civita covariant derivative. The above calculation does not derive a formula for the Levi-Civita covariant derivative of the $L^2$-metric. In fact, because the $L^2$-metric is a strictly \emph{weak} Riemannian metric\footnote{The topology induced by the inner product $G_q(\cdot,\cdot)$ on each tangent space $T_qC^\infty(M,N)$ is \emph{strictly weaker} than the manifold topology.}, the existence of the Levi-Civita covariant derivative is not even guaranteed a priori. To be able to speak of the geodesic equation for the $L^2$-metric, we need to show first that the $L^2$-metric admits a Levi-Civita covariant derivative.

The covariant derivative in infinite dimensions is a tricky object. Formally, if $\mc M$ is an (infinite-dimensional) manifold, it is a map
\[
\nabla: \mf X(\mc M) \x \mf X(\mc M) \to \mf X(\mc M)\,,
\]
and $\nabla_X Y$ is the derivative of the vector field $Y$ in the direction of the vector field $X$. When $\mc M = C^\infty(M,N)$, a vector field is a map
\[
X : C^\infty(M,N) \to C^\infty(M,TN)\,,
\]
subject to $\pi_N \circ X(q) = q$ for all $q \in C^\infty(M,N)$. We don't have a practical way to describe these maps or to work with them. In charts a covariant derivative can be written using Christoffel symbols\footnote{This is true for Levi-Civita covariant derivatives. In general, the relationship between covariant derivatives and Christoffel symbols in infinite dimensions is not entirely clear. It may be that there exist covariant derivatives that do not admit Christoffel symbols although no explicit examples are known.} as
\[
\nabla_X Y(x) = DY(x).X(x) - \Ga_x(X(x),Y(x))\,.
\]
Christoffel symbols, however, can only be defined in charts. On $C^\infty(M,N)$ charts around a map $q$ map an open set in $C^\infty(M,N)$ to an open set in $\Ga(q^\ast TN)$. Charts for $C^\infty(M,N)$ obscure the geometry of $N$. We could also consider charts for $N$, but in general the image of a map $q : M \to N$ will not be contained in a single chart\footnote{One can consider multiple charts covering $M$ and a neighborhood of the image of $q$. In this way one can identify $C^\infty(M,N)$ with a submanifold of a vector space. See \cite{Inci2013} where this construction has been carried out for the spaces $H^s(M,N)$.}. Thus we are forced to conclude that charts are not a helpful tool to relate the geometry of $C^\infty(M,N)$ and that of $N$.

What can we use then? The first tool at our disposal is the description of a covariant derivative $\nabla$ via its \emph{connector} $K : TT\mc M \to T\mc M$,
\[
\nabla_X Y = K \circ TY \circ X\,.
\]
One can view the connector as the coordinate-invariant form of Christoffel symbols, because in local coordinates,
\[
K(x,h;k,l) = (x, l - \Ga_x(k,h))\,.
\]

Our second tool is the observation that if $N \subseteq P$ is a submanifold, then $C^\infty(M,N)$ is a submanifold of $C^\infty(M,P)$. Furthermore, if $(P,g)$ is a Riemannian manifold and $N$ carries the induced Riemannian metric, then the $L^2$-metric on $C^\infty(M,N)$ is the restriction of the $L^2$-metric on $C^\infty(M,P)$. Using Nash's embedding theorem we can embed $(N,g)$ isometrically into some Euclidean space $\R^d$ with the standard metric. Since $C^\infty(M,\R^d)$ is a vector space and the $L^2$-metric reduces to the $L^2$-norm,
\[
G_q(h,k) = \int_{M} \langle h(x), k(x) \rangle \,\mu(x) = \langle h, k \rangle_{L^2(\mu)}\,.
\]
the Riemannian geometry is as simple as one can hope for.

\subsection{Outline}

Our plan for this note is as follows: First we review the differential geometry of the manifold $C^\infty(M,N)$ and the functorial nature of the correspondence
\[
N \rightsquigarrow C^\infty(M,N)\,.
\]
The functorial nature manifests itself in identities such as
\[
TC^\infty(M,N) = C^\infty(M,TN)
\quad\text{and}\quad
\pi_{C^\infty(M,N)}(h) = \pi_{N} \circ h\,,
\]
with $\pi_N : TN \to N$ the canonical projection. 

Next we revisit the extension of the Levi-Civita covariant derivative $\nabla^g$ to vector fields along arbitrary maps. The formula
\[
\nabla^g_X s = K \circ Ts \circ X\,,
\]
can be extended to maps $s : P \to TN$ with $P$ an arbitrary finite- or infinite-dimensional manifold. The choice $P = C^\infty(M, N) \x M$ together with the exponential law alows us to define a covariant derivative $\nabla$ on $C^\infty(M,N)$. We then compute the connector, spray, exponential map and curvature tensor of this connection.

Finally we look at the $L^2$-metric and show that the connection described above is the Levi-Civita connection of the $L^2$-metric. We show this first for the space $C^\infty(M,\R^d)$ and then embed $N$ as an isometric submanifold in $\R^d$.

\subsection{Historical notes}

The $L^2$-metric was first considered on the diffeomorphism group $\on{Diff}(M)$ by Ebin and Marsden in \cite[Section~9]{Ebin1970}. There the authors used the group structure of $\on{Diff}(M)$ and right-invariant vector fields to show that the Levi-Civita covariant derivative exists and computed the spray and the exponential map of the metric. A different proof, also utilizing the group structure, can be found in \cite[Proposition~2]{Bao1993} and a curvature computation in the same setting was performed by Misio{\l}ek in \cite[Proposition~3.4]{Misiolek1993}. Because of their reliance on the group structure these proofs do not extend to the whole space $C^\infty(M,M)$. 

Kainz calculated in \cite{Kainz1984}, generalizing the work of Binz \cite{Binz1980}, the Levi-Civita covariant derivative and the curvature of the related metric
\[
G_q(h,k) = \int_M g_{q(x)}(h(x),k(x)) \, (q^\ast \mu^g)(x)\,,
\]
on the space $\on{Imm}(M,N)$ of immersions. These calculations used the connector-based formulas for the covariant derivative and the curvature that were adopted in this note.

Freed and Groisser summarize in \cite{Freed1989} results about the $L^2$-metric with the proofs only sketched. The $L^2$-metric with a flat ambient space was used in the context of optimal transport by Otto in \cite{Otto2001}. An overview of the $L^2$-metric and its relation to hydrodynamics and optimal transport can be found in the book \cite{Khesin2009} by Khesin and Wendt.

\section{The manifold $C^\infty(M,N)$}
\label{sec:cmn_manifold}

\subsection{Functorial properties}

Let $M$ be compact manifold without boundary and $N$ a finite-dimensional manifold. Then the space $C^\infty(M,N)$ is an infinite-dimensional Fr\'echet manifold. We refer to \cite[Chapter~9]{Kriegl1997} for details on the differentiable structure although the ideas go back to Eells~\cite{Eells1958}. Here want to emphasize the functorial nature of the correspondence
\[
N \rightsquigarrow C^\infty(M,N)\,.
\]
Let $P$ be another finite-dimensional manifold and $f : N \to P$ a smooth map. Then we obtain the smooth map
\[
L_f : C^\infty(M,N) \to C^\infty(M,P)\,,\quad
q \mapsto f \circ q\,,
\]
which is the \emph{left-composition} with $f$. Other notation for $L_f$ that can be found in the literature are $f_\ast$ for \emph{push-forward}, $\om_f$, and $C^\infty(M,f)$ to emphasize that $L_f$ is the transformation of the morphism $f : N \to P$ under the functor $C^\infty(M,\cdot)$.

The functor $C^\infty(M,\cdot)$ commutes with the functors commonly enountered in differential geometry such as the tangent bundle functor. In detail this means that the commutative diagram
\[
\xymatrix{
TN \ar[r]^{Tf} \ar[d]_{\pi_N} & TP \ar[d]^{\pi_P} \\
N \ar[r]_f & P
}
\]
gives rise to the two commutative diagrams---the first obtained by applying $C^\infty(M,\cdot)$ to the above diagram and the second by applying the tangent functor to the map $L_f : C^\infty(M,N) \to C^\infty(M,P)$---shown below:
\[
\xymatrix{
C^\infty(M,TN) \ar[r]^{L_{Tf}} \ar[d]_{L_{\pi_N}} & C^\infty(M,TP) \ar[d]^{L_{\pi_P}} \\
C^\infty(M,N) \ar[r]_{L_f} & C^\infty(M,P)
}
\quad\text{and}\quad
\xymatrix{
TC^\infty(M,N) \ar[r]^{TL_f} \ar[d]_{\pi_{{C^\infty}(M,N)}} & 
TC^\infty(M,P) \ar[d]^{\pi_{{C^\infty}(M,N)}} \\
C^\infty(M,N) \ar[r]_{L_f} & C^\infty(M,P)
}\,.
\]
Because the two functors commute these diagrams coincide. Apart from the identification of the spaces
\[
TC^\infty(M,N) = C^\infty(M,TN)\,,
\]
we also have the following identities summarized in the next lemma.
\begin{lemma}
Let $M$ be compact $N$, $P$ finite-dimensional and $f \in C^\infty(N,P)$. Then
\begin{enumerate}
\item
$\pi_{C^\infty(M,N)} = L_{\pi_N} : C^\infty(M,TN) \to C^\infty(M,N)$
\item
$TL_f = L_{Tf} : C^\infty(M,TN) \to C^\infty(M,TP)$
\end{enumerate}
\end{lemma}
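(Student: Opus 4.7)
The plan is to use the identification $TC^\infty(M,N) = C^\infty(M,TN)$ established in the preceding discussion (which itself rests on the exponential law of convenient calculus) and to unpack both assertions by tracing where a given tangent vector is sent under each map. The entire content of the lemma is that the tangent functor and $C^\infty(M,\cdot)$ commute not just on objects but also on the structure maps (projection and tangent of a morphism).

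For (1), I would start by recalling that, under the identification $TC^\infty(M,N) = C^\infty(M,TN)$, an element $h \in T_qC^\infty(M,N)$ is precisely a smooth map $h : M \to TN$ satisfying $\pi_N \circ h = q$; indeed, $h$ arises as $h(x) = \partial_t|_0 q_t(x)$ for a smooth path $t \mapsto q_t$ in $C^\infty(M,N)$ with $q_0 = q$, and the exponential law promotes this to a smooth map $M \to TN$ over $q$. By definition of the projection of a tangent bundle, $\pi_{C^\infty(M,N)}(h) = q$. Comparing with $L_{\pi_N}(h) = \pi_N \circ h = q$ finishes the identification.

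For (2), I would compute $TL_f$ pointwise via a variation. Let $h \in T_q C^\infty(M,N)$ and pick a smooth path $t \mapsto q_t$ in $C^\infty(M,N)$ with $q_0 = q$ and $\partial_t|_0 q_t = h$. Then
\[
TL_f(h) = \partial_t\big|_0 L_f(q_t) = \partial_t\big|_0 (f \circ q_t).
\]
The exponential law lets us evaluate at $x \in M$ and interchange differentiation in $t$ with the variable $x$, giving
\[
\big(TL_f(h)\big)(x) = \partial_t\big|_0 f(q_t(x)) = T_{q(x)}f \cdot h(x) = (Tf \circ h)(x) = L_{Tf}(h)(x),
\]
as required. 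One should then note that the result does not depend on the chosen path, which follows automatically once $TL_f$ is recognised as a well-defined tangent map.

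The main obstacle—if one insists on a fully rigorous argument—is not either of the two computations, which are essentially tautological, but the justification for the identification $TC^\infty(M,N) = C^\infty(M,TN)$ and for the interchange of $\partial_t$ with evaluation at $x$. Both of these are consequences of the exponential law in convenient calculus (\cite[Section~3]{Kriegl1997}): a smooth curve into $C^\infty(M,N)$ corresponds to a smooth map $(-\ep,\ep) \times M \to N$, and differentiation of such a map at $t = 0$ yields a smooth section of $TN$ over $q$. Once this machinery is in place, (1) is immediate and (2) is a one-line diagram chase.
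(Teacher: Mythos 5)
Your proposal is correct and follows the same route the paper intends: the paper states this lemma without a separate proof, presenting it as a direct consequence of the identification $TC^\infty(M,N) = C^\infty(M,TN)$ via the exponential law and the coincidence of the two commutative diagrams, which is exactly what your pointwise verification with paths makes explicit. Nothing is missing; your computation of $TL_f$ by differentiating $t\mapsto f\circ q_t$ and interchanging $\partial_t$ with evaluation at $x$ is the standard justification and is consistent with how the paper argues in the analogous Lemmas~\ref{lem:Lf_wedge_identity} and~\ref{lem:Ts_wedge_identity}.
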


We will write $\pi_{C^\infty}$ for $\pi_{C^\infty(M,N)}$ when the spaces in question are clear. We shall also make use of the following theorem relating submanifolds and the corresponding spaces of maps. A proof can be found in \cite[Proposition~10.8]{Michor1980}.

\begin{proposition}
Let $N \subseteq P$ be a submanifold. Then $C^\infty(M,N)$ is a splitting submanifold of $C^\infty(M,P)$.
\end{proposition}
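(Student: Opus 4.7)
The plan is to construct submanifold charts for $C^\infty(M,N)$ inside $C^\infty(M,P)$ using the standard chart construction on mapping spaces, with the charts coming from a \emph{local addition} on $P$ that is adapted to the submanifold $N$. Recall that a chart on $C^\infty(M,P)$ around a map $q$ takes the form
\[
\Phi_q : V \subseteq \Gamma(q^\ast TP) \to C^\infty(M,P)\,,\qquad h \mapsto \eta \circ h\,,
\]
where $\eta : U \to P$ is a local addition defined on a neighborhood $U$ of the zero section of $TP$ (i.e., a smooth map with $\eta(0_y)=y$ such that $(\pi_P,\eta)$ is a diffeomorphism onto a neighborhood of the diagonal), and $V$ is the open subset of sections whose image lies in $U$. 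My goal is to arrange $\eta$ so that this chart restricts to a chart of $C^\infty(M,N)$.

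First I would build a local addition $\eta$ on $P$ that is adapted to $N$ in the sense that $\eta(T_yN \cap U) \subseteq N$ for every $y \in N$, and such that $\eta|_{TN \cap U}$ is itself a local addition on $N$. The construction uses a tubular neighborhood of $N$ in $P$: fix a Riemannian metric on $P$, let $\nu N$ be the normal bundle and $W$ a tubular neighborhood, giving a splitting $TP|_N = TN \oplus \nu N$ and a diffeomorphism $W \cong \nu N$. Now for $y \in N$ and $v = v_1 + v_2 \in T_yN \oplus \nu_yN$, define $\eta(v)$ by first flowing along a geodesic in $(N, g|_N)$ from $y$ with initial velocity $v_1$ to reach a point $y'\in N$, then identifying $v_2$ with a normal vector at $y'$ (via parallel transport along this geodesic) and following the tubular-neighborhood identification to get a point of $W \subseteq P$. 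One extends $\eta$ smoothly to a neighborhood of the zero section over all of $P$ by using any local addition outside a neighborhood of $N$, glued with a partition of unity. By construction $\eta$ is a local addition on $P$ and, when restricted to $TN$, it is a local addition on $N$.

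With this $\eta$ fixed, I would verify the submanifold property. For $q \in C^\infty(M,N)$ the sections of the pullback bundle split topologically as
\[
\Gamma(q^\ast TP) = \Gamma(q^\ast TN) \oplus \Gamma(q^\ast \nu N)\,,
\]
since $q^\ast TN$ is a subbundle of $q^\ast TP$ with complement $q^\ast \nu N$; both summands are closed and complemented in the convenient-vector-space sense. The adaptedness of $\eta$ gives, for $h \in V$,
\[
\Phi_q(h) \in C^\infty(M,N) \iff \eta(h(x)) \in N \text{ for all } x \in M \iff h(x) \in T_{q(x)}N \text{ for all } x\,,
\]
and the last condition is precisely $h \in \Gamma(q^\ast TN)$. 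Hence $\Phi_q^{-1}\bigl(C^\infty(M,N)\bigr) = V \cap \Gamma(q^\ast TN)$, which is exactly the slice chart condition for the splitting subspace $\Gamma(q^\ast TN) \subseteq \Gamma(q^\ast TP)$. Since such charts exist around every point of $C^\infty(M,N)$, this exhibits it as a splitting submanifold of $C^\infty(M,P)$.

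The main obstacle is the construction of an adapted local addition $\eta$: this is the geometric content of the statement, while the splitting of sections and the chart-matching are then formal. In particular, one must verify that the glued map really is smooth on a full neighborhood of the zero section in $TP$ and satisfies the local-diffeomorphism condition $(\pi_P,\eta)$. All of this is standard tubular-neighborhood work on finite-dimensional manifolds, and once it is done the passage to $C^\infty(M,\cdot)$ via the functorial chart $\Phi_q$ is automatic.
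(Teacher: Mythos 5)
The paper does not prove this proposition itself; it simply cites Michor's \emph{Manifolds of Differentiable Mappings}, Proposition~10.8, and your argument --- building a local addition on $P$ adapted to $N$ via a tubular neighbourhood and checking that the resulting chart $\Phi_q$ carries $C^\infty(M,N)$ onto $V \cap \Gamma(q^\ast TN)$, a splitting subspace of $\Gamma(q^\ast TP)$ --- is essentially the standard proof given there. Your construction is correct (one small remark: since the slice charts are only needed at points $q \in C^\infty(M,N)$, the adapted local addition only needs to be defined over a neighbourhood of $N$, so the partition-of-unity extension step can be dispensed with, and compatibility with the usual atlas on $C^\infty(M,P)$ follows from the independence of the chart construction from the choice of local addition).
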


\subsection{The second tangent bundle}
Let $M$ be a compact manifold and $N$ a finite-dimensional manifold. The two vector bundle structures on $TTN$ induce two corresponding vector bundle structures on $TTC^\infty(M,N) \cong C^\infty(M,TTN)$ which are shown in the corresponding diagram.
\[
\xymatrix@C=-1cm{
& TTC^\infty(M,N) \ar[ld]_{\pi_{TC^\infty}} \ar[rd]^{T\pi_{C^\infty}} & \\
TC^\infty(M,N) \ar[rd]_{\pi_{C^\infty}} & & TC^\infty(M,N) \ar[ld]^{\pi_{C^\infty}} \\
& C^\infty(M,N) &
}
\qquad
\xymatrix@C=-1cm{
& C^\infty(M,TTN) \ar[ld]_{L_{\pi_{TN}}} \ar[rd]^{L_{T\pi_{M}}} & \\
C^\infty(M,TN) \ar[rd]_{L_{T\pi_M}} & & C^\infty(M,TN) \ar[ld]^{L_{\pi_{M}}} \\
& C^\infty(M,N) &
}
\]
The vertical bundle,
\begin{align*}
VTC^\infty(M,N) &= \ker T\pi_{C^\infty} = \ker L_{T\pi_N} \\
&= \{ \xi \in C^\infty(M,TTN) \,:\, \xi(x) \in VTN\; \forall x \in M \} \\
&= C^\infty(M,VTN) \,,
\end{align*}
consists of maps that map pointwise into the vertical bundle $VTN$. Since
\[
TC^\infty(M,N) \x_{C^\infty(M,N)} TC^\infty(M,N)
\cong
C^\infty(M, TN \x_N TN)\,,
\]
the vertical lift also acts pointwise,
\[
\on{vl}_{C^\infty} : C^\infty(M,TN \x_N TN) \to C^\infty(TTN)\,,\quad
\on{vl}_{C^\infty}(h,k) = \on{vl}_M \circ (h,k)\,,
\]
as does the vertical projection
\[
\on{vpr}_{C^\infty} : C^\infty(M,VTN) \to C^\infty(M,TN)\,,\quad
\on{vpr}_{C^\infty}(\xi) = \on{vpr}_M \circ \xi\,.
\]
The statement about the vertical lift follows directly from the definition,
\[
\on{vl}_{C^\infty}(h,k)(x) = \frac{\dd}{\dd t}\bigg|_{t=0} \left(h + tk \right) (x)
= \frac{\dd}{\dd t}\bigg|_{t=0} \left(h(x) + tk(x) \right) = \on{vl}_M(h(x),k(x))\,,
\]
and the statement about the vertical projection, because
\[
\on{vpr}_{C^\infty} = L_{\on{pr_2}} \circ \left(\on{vl}_{C^\infty}\right)\inv
= L_{\on{pr}_2} \circ L_{\on{vl}_M}\inv = L_{\on{pr}_2 \circ \on{vl}_M\inv} = L_{\on{vpr}_M}\,.
\]
The canonical flip on $N$, $\ka_{N} : TTN \to TTN$ is given in coordinates by
\[
\ka_N (x,h;k,l) = (x,k;h,l)\,,
\]
and can be characterized as the unique smooth map $TTN \to TTN$ satisfying the equation $\p_t \p_s c(t,s) = \ka_N \circ \p_s \p_t c(t,s)$ for each $c \in C^\infty(\R^2,N)$. Using this property we see that the canonical flip on $C^\infty(M,N)$ also acts pointwise,
\[
\ka_{C^\infty} : C^\infty(M,TTN) \to C^\infty(M,TTN)\,,\quad
\ka_{C^\infty}(\xi) = \ka_N \circ \xi\,.
\]

\section{A covariant derivative on $C^\infty(M,N)$}
\label{ss:cmn_covariant_derivative}

Let $\nabla^g$ be the Levi-Civita covariant derivative of $(N,g)$ and $K : TTN \to TN$ its connector, such that
\[
\nabla^g_X Y = K \circ TY \circ X : N \to TN \to TTN \to TN\,,
\]
for $X,Y \in \mf X(N)$. In local coordinates $K(x,h;k,l) = (x,l - \Ga_x(k,h))$. See~\cite[22.8]{Michor2008b} for details. 

The connector point of view allows us to extend the covariant derivative to act on vector fields along maps. If $f : P \to N$ is a smooth map, $s : P \to TN$ a vector field along $f$, meaning $\pi_N \circ s = f$ and $X \in \mf X(P)$ is a vector field on $N$, then we can define
\[
\nabla^g_X s = K \circ Ts \circ X : P \to TP \to TTN \to TN \,.
\]

There is no reason, why $P$ has to be a finite-dimensional manifold and so we can apply this construction with $P = C^\infty(M,N) \x M$ to obtain a covariant derivative on $C^\infty(M,N)$. This construction follows~\cite[Section~4.2]{Bauer2011b}. Let $Q$ be an arbitrary manifold, finite- or infinite-dimensional. We identify
\[
s \in C^\infty(Q, C^\infty(M,TN))
\qquad\text{and}\qquad
X \in \mf X(Q)
\]
with
\[
s^\wedge \in C^\infty(Q \x M,TN)
\qquad\text{and}\qquad
(X, 0_M) \in \mf X(Q \x M)\,.
\]
Then we can define the covariant derivative
\[
\nabla^g_{(X,0_M)} s^\wedge \in C^\infty(Q \x M, TN)
\]
as above and thus we set
\[
\nabla_X s = \left( \nabla^g_{(X,0_M)} s^\wedge\right)^\vee \in C^\infty(Q, C^\infty(M,TN))\,.
\]
When $Q=C^\infty(M,N)$, this defines a covariant derivative on $C^\infty(M,N)$. Next we compute the connector, the spray and the curvature of this connection.

\begin{lemma}
Let $\nabla$ be the covariant derivative on $C^\infty(M,N)$ induced by $\nabla^g$. Then the connector $K$ of $\nabla$ is given by
\[
K : C^\infty(M,TTN) \to C^\infty(M,TN)\,,\quad K(\xi) = K^g \circ \xi\,,
\]
where $K^g : TTN \to TN$ is the connector of $\nabla^g$. Furthermore, the linear connection $C$ of $\nabla$ is
\[
C : C^\infty(M, TN \x_N TN) \to C^\infty(M,TTN)\,,\quad C(h,k) = C^g \circ (h,k)\,,
\]
with $C^g : TN \x_N TN \to TTN$ the linear connection of $\nabla^g$.
\end{lemma}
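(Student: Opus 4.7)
The plan is to unwind the recursive definition of $\nabla$ and identify its connector by comparison with the characterizing identity $\nabla_X s = K \circ Ts \circ X$. Set $Q = C^\infty(M,N)$, take $X \in \mf X(Q)$ and a vector field $s : Q \to TQ = C^\infty(M,TN)$, and let $s^\wedge : Q \x M \to TN$ denote its associate under the exponential law. By the construction of $\nabla$ from $\nabla^g$,
\[
\nabla_X s \;=\; \bigl(\nabla^g_{(X,0_M)} s^\wedge\bigr)^\vee
\;=\; \bigl(K^g \circ Ts^\wedge \circ (X,0_M)\bigr)^\vee\,,
\]
so the remaining task is to rewrite the right-hand side in terms of $Ts$, interpreted through the identification $TC^\infty(M,TN) = C^\infty(M,TTN)$.

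For the pointwise step, I would fix $q_0 \in Q$ and $x \in M$ and pick any smooth curve $c$ in $Q$ with $c(0) = q_0$ and $\dot c(0) = X(q_0)$. Then
\[
Ts^\wedge(q_0,x)\cdot\bigl(X(q_0),\, 0_x\bigr)
\;=\; \p_\ep\big|_0 s^\wedge\bigl(c(\ep),\, x\bigr)
\;=\; \p_\ep\big|_0 s\bigl(c(\ep)\bigr)(x)
\;=\; \bigl(Ts \cdot X(q_0)\bigr)(x),
\]
where the last equality is the pointwise content of $TC^\infty(M,TN) = C^\infty(M,TTN)$. Applying $K^g$ fibrewise and using $L_{K^g}(\xi) = K^g \circ \xi$,
\[
(\nabla_X s)(q_0) \;=\; K^g \circ \bigl(Ts \cdot X(q_0)\bigr) \;=\; L_{K^g}\bigl(Ts \cdot X(q_0)\bigr)\,,
\]
hence $\nabla_X s = L_{K^g} \circ Ts \circ X$. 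Comparing with $\nabla_X s = K \circ Ts \circ X$, and noting that every element of $C^\infty(M,TTN)$ is realized as $(Ts \cdot X)(q_0)$ for suitable $q_0, X, s$ (a short chart argument at $q_0$ with a locally constant $X$ and a locally affine section $s$ having prescribed 0- and 1-jets suffices), we conclude $K = L_{K^g}$, which is the stated formula.

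For the linear connection $C$, the argument is formal. The map $C^g : TN \x_N TN \to TTN$ is characterized by the identities $\pi_{TN} \circ C^g(h,k) = k$, $T\pi_N \circ C^g(h,k) = h$ and $K^g \circ C^g(h,k) = 0$. Each of these involves only operations on $TTN$ that we have already shown to act pointwise on $C^\infty(M,TTN)$, so the pointwise-defined map $L_{C^g} : (h,k) \mapsto C^g \circ (h,k)$ satisfies the analogous identities with respect to $K$ and the two bundle projections on $TTC^\infty(M,N)$. Uniqueness of the horizontal lift then forces $C = L_{C^g}$. The main subtlety is paragraph two: the exponential-law step must be made rigorous using the convenient-calculus smoothness of $s^\wedge$, and one must justify that the finite-dimensional connector $K^g : TTN \to TN$ intertwines the pointwise and the $C^\infty$-level tangent structures, which follows from the functorial properties recalled in \autoref{sec:cmn_manifold}.
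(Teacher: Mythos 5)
Your argument is correct and follows essentially the same route as the paper: you unwind the definition of $\nabla$ through the exponential law, establish pointwise that $Ts^\wedge \circ (X,0_M) = (Ts \circ X)^\wedge$ (which is exactly the paper's auxiliary lemma on $Ts$), and conclude $K = L_{K^g}$; your explicit remark that every element of $C^\infty(M,TTN)$ arises as $(Ts\cdot X)(q_0)$ makes precise a step the paper leaves implicit. The only cosmetic difference is in the second half: you characterize $C^g$ by the three horizontal-lift identities and invoke uniqueness, whereas the paper verifies the single identity $K(\xi) = \on{vpr}_{C^\infty}\bigl(\xi - C(T\pi_{C^\infty}.\xi, \pi_{TC^\infty}(\xi))\bigr)$ pointwise; both reduce to the observation that all the relevant structure maps act pointwise.
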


\begin{proof}
We will show that for all $s \in C^\infty(Q,C^\infty(M,TN))$ and all $X \in \mf X(Q)$, we have
\[
\nabla_X s = L_{K^g} \circ Ts \circ X\,.
\]
To see this note that using \autoref{lem:Lf_wedge_identity} and \autoref{lem:Ts_wedge_identity} we have
\begin{align*}
\left( \nabla_X s \right)^\wedge &= \nabla^g_{(X,0_M)} s^\wedge \\
&= K^g \circ Ts^\wedge \circ (X, 0_M) \\
&= K^g \circ \left(Ts \circ X\right)^\wedge \\
&= \left(L_{K^g} \circ Ts \circ X \right)^\wedge \,.
\end{align*}

To prove that $C$ is the linear connector of $\nabla$, we will show that $C$ defined via $C = L_{C^g}$ satisfies the identity
\[
K(\xi) = \on{vpr}_{C^\infty}\big( \xi - C(T\pi_{C^\infty}.\xi, \pi_{TC^\infty}(\xi))\big)\,,
\]
for all $\xi \in C^\infty(TTN)$ and hence is the linear connector of $\nabla$. First note that
\begin{align*}
T\pi_{C^\infty} 
&= T\pi_{C^\infty(M,N)} = TL_{\pi_N} = L_{T\pi_N} \,,\quad\text{and} \\
\pi_{TC^\infty}
&= \pi_{C^\infty(M,TN)} = L_{\pi_{TN}}\,.
\end{align*}
Then
\begin{multline*}
\on{vpr}_{C^\infty}\big(\xi - C(T\pi_{C^\infty}.\xi, \pi_{TC^\infty}(\xi))\big)(x) = \\
= \on{vpr}_{M}\big(\xi(x) - C^g(T\pi_N.\xi(x), \pi_{TN}(\xi(x))) \big)
= K^g(\xi(x)) = K(\xi)(x)\,.
\end{multline*}
Hence $C = L_{C^g}$ is the linear connector of $\nabla$.
\end{proof}

The proof relied on the following two lemmas

\begin{lemma}
\label{lem:Lf_wedge_identity}
Let $s \in C^\infty(P, C^\infty(M, N))$ and $f \in C^\infty(N,Q)$. Then
\[
L_f \circ s = \left(f \circ s^\wedge\right)^\vee\,.
\]
\end{lemma}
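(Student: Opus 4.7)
The plan is to verify the identity by a direct pointwise computation, relying on the exponential law for convenient vector spaces to ensure that both sides are well-defined smooth maps $P \to C^\infty(M, Q)$. The content is essentially that left-composition with $f$ commutes with the $\wedge$/$\vee$ operations, which ultimately reduces to the associativity of ordinary composition once we unfold the definitions.

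First, I would record the two sides as maps and reduce to evaluating them at an arbitrary pair $(p, x) \in P \times M$. By definition of $L_f$, for each $p \in P$ the map $(L_f \circ s)(p) \in C^\infty(M, Q)$ sends $x \mapsto f(s(p)(x))$. On the other hand, by the definition of the $\wedge$-transform we have $s^\wedge(p, x) = s(p)(x)$, so $(f \circ s^\wedge)(p, x) = f(s(p)(x))$, and by the definition of the $\vee$-transform, $(f \circ s^\wedge)^\vee(p)(x) = (f \circ s^\wedge)(p, x) = f(s(p)(x))$. Both maps therefore agree pointwise on $P \times M$.

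To conclude that this pointwise equality gives equality as maps $P \to C^\infty(M, Q)$, I would invoke the exponential law, cited from \cite[Section~3]{Kriegl1997}: the assignment $s \mapsto s^\wedge$ is a bijection between $C^\infty(P, C^\infty(M, Q))$ and $C^\infty(P \times M, Q)$, so two smooth maps $P \to C^\infty(M, Q)$ are equal iff their $\wedge$-transforms agree. Smoothness of each side should be noted in passing: $L_f \circ s$ is smooth because $L_f$ and $s$ are, while $(f \circ s^\wedge)^\vee$ is smooth because $f \circ s^\wedge : P \times M \to Q$ is smooth as a composition of smooth maps.

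I do not expect a real obstacle here; the entire argument is bookkeeping with the exponential law. The only point that requires a moment of care is making sure the direction of the $\wedge$/$\vee$ correspondence is used correctly, i.e.\ that one identifies the $\wedge$-transform of the left-hand side with $f \circ s^\wedge$, which is exactly the computation above.
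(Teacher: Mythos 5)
Your proposal is correct and matches the paper's own proof, which is exactly the same pointwise unfolding $\left(L_f \circ s\right)^\wedge(y,x) = f(s(y)(x)) = \left(f \circ s^\wedge\right)(y,x)$; your additional remarks on smoothness and the bijectivity of the $\wedge$/$\vee$ correspondence are sensible but left implicit in the paper.
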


\begin{proof}
Let $x \in M$ and $y \in P$. Then we have
\begin{align*}
\left(L_f \circ s\right)^\wedge(y,x) &= \left(L_f \circ s\right)(y)(x)
= L_f(s(y))(x) \\
&= \left(f \circ s(y)\right)(x)
= f(s(y)(x)) = f\left(s^\wedge(y,x)\right)
= \left(f \circ s^\wedge\right)(y,x)\,.\qedhere
\end{align*}
\end{proof}

\begin{lemma}
\label{lem:Ts_wedge_identity}
Let $s \in C^\infty(P, C^\infty(M,N))$ and $X \in \mf X(P)$. Then
\[
Ts \circ X = \left( Ts^\wedge \circ (X,0_M) \right)^\vee\,.
\]
\end{lemma}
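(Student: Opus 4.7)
The plan is to verify the identity pointwise by unfolding the wedge/vee operations and reducing everything to a single directional derivative of $s^\wedge$. Applying the wedge to both sides, it suffices to show that
\[
(Ts \circ X)^\wedge(y,x) = Ts^\wedge\big(X(y),\, 0_x\big)
\]
for every $y \in P$ and $x \in M$.

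First I would represent the tangent vector $X(y) \in T_y P$ by a smooth curve $c : (-\ep,\ep) \to P$ with $c(0) = y$ and $\p_t|_0 c(t) = X(y)$. Under the identification $TC^\infty(M,N) = C^\infty(M,TN)$ from the previous lemma, the tangent vector $Ts(X(y)) \in T_{s(y)} C^\infty(M,N)$ is precisely the element of $C^\infty(M,TN)$ represented by the curve $t \mapsto s(c(t))$ in $C^\infty(M,N)$. Evaluating at $x$ and using smoothness of the evaluation $\on{ev}_x : C^\infty(M,N) \to N$, we obtain
\[
(Ts \circ X)^\wedge(y,x) = \big(Ts(X(y))\big)(x) = \p_t\big|_0 s(c(t))(x) = \p_t\big|_0 s^\wedge(c(t), x)\,.
\]
For the right-hand side, the curve $t \mapsto (c(t),x)$ in $P \x M$ passes through $(y,x)$ at $t=0$ with tangent vector $(X(y), 0_x)$, since the $M$-component is constant. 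By definition of the tangent map,
\[
Ts^\wedge\big(X(y), 0_x\big) = \p_t\big|_0 s^\wedge(c(t), x)\,,
\]
which matches the expression obtained for the left-hand side. Applying the vee operation then yields the lemma.

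The only subtle point is the interchange of pointwise evaluation with $\p_t|_0$, but this is simply the chain rule applied to the smooth map $\on{ev}_x$ and is an immediate consequence of the exponential law of convenient calculus, which is the working framework of the paper. I do not anticipate any substantive obstacle; the proof is in the same spirit as \autoref{lem:Lf_wedge_identity}, promoted from values to tangent vectors.
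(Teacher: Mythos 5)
Your proof is correct and follows essentially the same route as the paper: both verify the identity pointwise at $(y,x)$ by computing $Ts^\wedge$ on the vector $(X(y),0_x)$ and identifying the result with $\big(Ts.X(y)\big)(x)$. The only cosmetic difference is that you represent the tangent vector by a curve $t \mapsto (c(t),x)$, whereas the paper writes out the decomposition $T_{(y,x)}s^\wedge.(k,h) = T_x s(y).h + (T_y s.k)(x)$ and sets $h = 0_x$.
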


\begin{proof}
Take $x \in M$ and $y \in P$. Then, differentiating $s^\wedge(y,x) = s(y)(x)$ we obtain
\[
T_{(y,x)}s^\wedge.(k,h) = T_x s(y).h + \left(T_y s.k\right)(x)\,,
\]
with $h \in T_xM$ and $k \in T_y P$. When we set $h = 0_x$, this becomes 
\[
T_{(y,x)}s^\wedge.(k,0_x) = \left(T_y s.k\right)(x)\,,
\]
which can be written in terms of a vector field $X \in \mf X(P)$ as
\[
\big(T s^\wedge \circ (X, 0_M)\big)(y,x) = \big(Ts.X(y)\big)(x) = \left(Ts.X\right)^\wedge(y,x)\,.\qedhere
\]
\end{proof}

\begin{proposition}
Let $\nabla$ be the covariant derivative on $C^\infty(M,N)$ induced by $\nabla^g$.
\begin{enumerate}
\item
\label{thm:ebin1970:spray}
Let $\Xi^g : TN \to TTN$ be the geodesic spray of $(N,g)$. Then
\[
\Xi : C^\infty(M,TN) \to C^\infty(M,TTN)\,,\quad
X \mapsto \Xi^g \circ X\,,
\]
is the geodesic spray $\Xi$ of $\nabla$ and it is a $C^\infty$-mapping.
\item
\label{thm:ebin1970:exp}
Let $\exp^g : TN \supseteq U \to N$ be the exponential map on $(N,g)$, defined on a neighbourhood $U$ of the zero-section, and $\mc U = C^\infty(M,U) \subseteq C^\infty(M,TN)$. Then
\[
\exp : \mc U \to C^\infty(M,N)\,,\quad h \mapsto \exp^g \circ h\,,
\]
is the exponential map of $\nabla$ and $\exp$ is a $C^\infty$-mapping.
\end{enumerate}
\end{proposition}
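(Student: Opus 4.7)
The plan is to exploit the connector description of $\nabla$ from the preceding lemma together with the abstract characterization of the geodesic spray: on any manifold $\mc M$ with a linear connection determined by connector $K : TT\mc M \to T\mc M$, the geodesic spray $\Xi : T\mc M \to TT\mc M$ is the unique map satisfying $T\pi_{\mc M} \circ \Xi = \on{id}_{T\mc M}$ and $K \circ \Xi = 0$ (horizontal lift of the identity). Both sides of the claim in \ref{thm:ebin1970:spray} are equal to $L_{\Xi^g}$, so it suffices to verify that the candidate $\Xi(X) := \Xi^g \circ X$ satisfies these two conditions on $C^\infty(M,N)$.

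For the first condition, I would invoke $\pi_{C^\infty} = L_{\pi_N}$, hence $T\pi_{C^\infty} = L_{T\pi_N}$ by the functorial identity $TL_f = L_{Tf}$, and then combine with $T\pi_N \circ \Xi^g = \on{id}_{TN}$ to get $T\pi_{C^\infty} \circ \Xi = L_{T\pi_N \circ \Xi^g} = L_{\on{id}_{TN}} = \on{id}_{C^\infty(M,TN)}$. For the second, I would use $K = L_{K^g}$ from the previous lemma together with $K^g \circ \Xi^g = 0$ (the defining horizontality of $\Xi^g$ on $N$), giving $K \circ \Xi = L_{K^g \circ \Xi^g} = 0$. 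Smoothness of $\Xi = L_{\Xi^g}$ is then immediate from the functorial statement that $L_f$ is a $C^\infty$-map whenever $f$ is (the content of left-composition being smooth in convenient calculus, cf.\ the identities in \autoref{sec:cmn_manifold}).

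For \ref{thm:ebin1970:exp}, I would deduce the formula for $\exp$ directly from the pointwise nature of the spray. Given $h \in \mc U$, define $c(t,x) := \exp^g(t\,h(x))$ for $(t,x) \in [0,1] \x M$; this is well-defined since $h(x) \in U$ for every $x$, and smooth in both variables by smooth dependence of $\exp^g$ on initial conditions. Using the exponential law, $c$ corresponds to a smooth curve $\gamma : [0,1] \to C^\infty(M,N)$ with $\gamma(0) = \pi_N \circ h$ and $\gamma'(0) = h$. Since the velocity $\gamma'$ at every $t$ equals $\p_t c(t,\cdot)$, and the second derivative $\p_t \p_t c$ coincides pointwise with $\Xi^g(\p_t c)$, applying the formula for $\Xi$ from part \ref{thm:ebin1970:spray} gives $\gamma''(t) = \Xi(\gamma'(t))$, so $\gamma$ is a geodesic for $\nabla$. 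Evaluation at $t=1$ then yields $\exp(h) = \exp^g \circ h = L_{\exp^g}(h)$; smoothness of $\exp$ again follows from smoothness of left-composition, once one checks that $\mc U = C^\infty(M,U)$ is open in $C^\infty(M,TN)$.

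The main obstacle I anticipate is subtlety around openness of $\mc U$ and the correct interpretation of the characterization of the spray in the infinite-dimensional weak-Riemannian setting: one has to be careful that the horizontal lift characterization really uses only the connector and not completeness or existence of a norm-compatible decomposition of $TT\mc M$. Fortunately the connector formulation sidesteps any appeal to Hilbert-space techniques, so the whole argument reduces to the functorial identities $TL_f = L_{Tf}$, $\pi_{C^\infty} = L_{\pi_N}$ and $K = L_{K^g}$ and is essentially a diagram-chase at the pointwise level.
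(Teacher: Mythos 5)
Your argument is correct, and for part~(2) it is essentially the paper's: both reduce to the observation that, by the formula for the spray, geodesics of $\nabla$ are exactly the curves that are pointwise geodesics of $(N,g)$, whence $\exp = L_{\exp^g}$. For part~(1) you take a slightly different route. The paper obtains the spray as the diagonal of the linear connection, $\Xi(h) = C(h,h)$, and substitutes $C = L_{C^g}$ from the preceding lemma; you instead characterize the spray by the connector conditions $T\pi_{\mc M}\circ\Xi = \on{Id}$ and $K\circ\Xi = 0$ and verify them for $L_{\Xi^g}$ using $K = L_{K^g}$ together with the functorial identities $TL_f = L_{Tf}$ and $\pi_{C^\infty} = L_{\pi_N}$. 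Both verifications are one-line diagram chases at the pointwise level; yours has the small advantage of needing only the connector and not the linear connection $C$. One pedantic point: the two conditions you state do not by themselves determine $\Xi$ uniquely --- you also need $\pi_{T\mc M}\circ\Xi=\on{Id}_{T\mc M}$, i.e.\ that $\Xi$ is a section of $\pi_{T\mc M}$; this is equally immediate from $\pi_{TC^\infty} = L_{\pi_{TN}}$ and $\pi_{TN}\circ\Xi^g = \on{Id}_{TN}$, so nothing breaks. Your explicit attention to the openness of $\mc U = C^\infty(M,U)$ (which holds because $M$ is compact and $U\subseteq TN$ is open) addresses a point the paper passes over silently.
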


\begin{proof}
The geodesic spray of a connection can be written in terms of the linear connection as
\[
\Xi(h) = C(h,h)\,,
\]
and since for $\nabla$ we have $C(h,h) = C^g \circ (h,h)$, it follows that
\[
\Xi(h) = C(h,h) = C^g \circ (h,h) = \Xi^g \circ h\,,
\]
thus showing~\ref{thm:ebin1970:exp}.

Using~\ref{thm:ebin1970:spray} a curve $q : (-\ep,\ep) \to C^\infty(M,N)$ is a geodesic if for all $x \in M$
\begin{align*}
\p_t^2 q(t,x) &= \Xi(\p_t q(t))(x) \\
&= \Xi^g(\p_t q(t,x))\,,
\end{align*}
in other words, if $t \mapsto q(t,x)$ is a geodesic in $N$. Thus, for a tangent vector $h \in C^\infty(M,U)$ at $q$ we have
\[
\left(\on{exp}_q h\right)(x) = \exp^{g}_{q(x)} h(x)\,.
\]
This shows~\ref{thm:ebin1970:exp}.
\end{proof}

\begin{proposition}
Let $\nabla$ be the covariant derivative on $C^\infty(M,N)$ induced by $\nabla^g$ and $R^g : TN \x TN \x TN \to TN$ the curvature tensor of $(N,g)$. Then
\[
R : C^\infty(M,TN) \x C^\infty(M,TN) \x C^\infty(M,TN) \to C^\infty(M,TN)\,,\quad
(X,Y,Z) \mapsto R^g \circ (X,Y,Z)\,,
\]
is the curvature tensor of $\nabla$.
\end{proposition}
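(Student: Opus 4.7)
The plan is to reduce the computation of $R$ on $C^\infty(M,N)$ to the curvature tensor on $N$ by applying the curvature to a two-parameter variation, adjointing through the exponential law, and invoking the pointwise formula $\nabla_X s = L_{K^g} \circ Ts \circ X$ established in the proof of the previous lemma. Because $R$ is tensorial, it suffices to evaluate $R(\xi_1,\xi_2)\xi_3$ at a fixed $q_0 \in C^\infty(M,N)$ for arbitrary $\xi_i \in T_{q_0}C^\infty(M,N)$.

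First I would choose a smooth $c : (-\varepsilon,\varepsilon)^2 \to C^\infty(M,N)$ with $c(0,0) = q_0$ and $\partial_i c(0,0) = \xi_i$ for $i = 1,2$, together with a vector field $s : (-\varepsilon,\varepsilon)^2 \to C^\infty(M,TN)$ along $c$ with $s(0,0) = \xi_3$. Since $[\partial_1,\partial_2] = 0$, the standard identity for covariant derivatives along a map yields
\[
R(\xi_1,\xi_2)\xi_3 = \bigl(\nabla_{\partial_1}\nabla_{\partial_2} s - \nabla_{\partial_2}\nabla_{\partial_1} s\bigr)(0,0)\,.
\]

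Passing to the adjoints $c^\wedge : (-\varepsilon,\varepsilon)^2 \x M \to N$ and $s^\wedge : (-\varepsilon,\varepsilon)^2 \x M \to TN$ (the latter a vector field along $c^\wedge$), two successive applications of the pointwise formula for $\nabla$, with $Q = (-\varepsilon,\varepsilon)^2$, give
\[
\bigl(\nabla_{\partial_1}\nabla_{\partial_2} s\bigr)^\wedge = \nabla^g_{(\partial_1,0_M)}\nabla^g_{(\partial_2,0_M)} s^\wedge\,,
\]
and the analogous identity with the indices swapped. Applying the same curvature identity now on $N$ to the pair $c^\wedge, s^\wedge$ and evaluating at $(0,0,x)$ gives
\[
\bigl(\nabla^g_{(\partial_1,0_M)}\nabla^g_{(\partial_2,0_M)} s^\wedge - \nabla^g_{(\partial_2,0_M)}\nabla^g_{(\partial_1,0_M)} s^\wedge\bigr)(0,0,x) = R^g\bigl(\xi_1(x),\xi_2(x)\bigr)\xi_3(x)\,,
\]
so that $R(\xi_1,\xi_2)\xi_3(x) = R^g(\xi_1(x),\xi_2(x))\xi_3(x) = R^g \circ (\xi_1,\xi_2,\xi_3)(x)$.

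The only delicate point is the iteration of the pointwise formula, i.e.\ justifying that $\nabla_{\partial_2} s$ is again a vector field along $c$ to which the formula may be applied a second time, and that adjointing commutes with the resulting composition. This is routine given the naturality properties $TL_f = L_{Tf}$ and the compatibility of the wedge/vee adjunction with the tangent functor, both already in systematic use in the preceding proofs.
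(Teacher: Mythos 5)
Your proposal is correct and follows essentially the same route as the paper: both pass through the exponential law to a vector field along a map into $N$, iterate the defining identity $(\nabla_X s)^\wedge = \nabla^g_{(X,0_M)}s^\wedge$, and then invoke the curvature formula for covariant derivatives along maps on $(N,g)$. The only mechanical difference is that you test the curvature against a two-parameter variation (so the bracket term vanishes and tensoriality finishes the reduction), whereas the paper works with genuine vector fields $X,Y,Z$ on $C^\infty(M,N)$, handles the bracket via $([X,Y],0_M)=[(X,0_M),(Y,0_M)]$, and identifies $Tf\circ(X,0_M)=X^\wedge$ using the same adjunction lemmas.
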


\begin{proof}
Let $X,Y,Z$ be vector fields on $C^\infty(M,N)$, that is $X: C^\infty(M,N) \to C^\infty(M,TN)$ with $L_{\pi_N} \circ X = \on{Id}_{C^\infty}$ and similarly for $Y,Z$.
Using the definition of the curvature and the connection $\nabla$ we have
\begin{align*}
R(X,Y,Z) &=
\nabla_X \nabla_Y Z - \nabla_Y \nabla_X Z - \nabla_{[X,Y]} Z \\
&= \left( \nabla_{(X,0_M)} \nabla_{(Y,0_M)} Z^\wedge
- \nabla_{(Y,0_M)} \nabla_{(X,0_M)} Z^\wedge
- \nabla_{([X,Y],0_M)}Z^\wedge \right)^\vee\,,
\end{align*}
where
\begin{align*}
Z^\wedge &\in C^\infty( C^\infty(M,N) \x M, TN )\,, &
(X, 0_M),\, (Y,0_M) \in \mf X(C^\infty(M,N) \x M)\,.
\end{align*}
Using
\[
([X,Y],0_M) = \left[ (X,0_M), (Y,0_M) \right]\,,
\]
and \cite[Section~24.5]{Michor2008b}, we obtain
\begin{align*}
R \circ (X,Y,Z) &= \left(R^g \circ \left(Tf \circ (X,0_M), Tf \circ (Y, 0_M), Z^\wedge \right)\right)^\vee\,,
\end{align*}
where $f = \pi_N \circ Z^\wedge$. Now, using~\autoref{lem:Lf_wedge_identity},
\[
f^\vee = \left(\pi_N \circ Z^\wedge\right)^\vee = L_{\pi_N} \circ Z = \on{Id}_{C^\infty}\,,
\]
and hence using~\autoref{lem:Ts_wedge_identity},
\[
Tf \circ (X,0_M) = \left( Tf^\vee \circ X \right)^\wedge 
= \left(T\on{Id}_{C^\infty} \circ X\right)^\wedge = X^\wedge\,.
\]
Now we apply \autoref{lem:Lf_wedge_identity} once more and obtain
\begin{align*}
R \circ (X,Y,Z)
&= \left( R^g \circ \left( X^\wedge, Y^\wedge, Z^\wedge \right) \right)^\vee
= L_{R^g} \circ (X,Y,Z)\,,
\end{align*}
as required.
\end{proof}

\section{The $L^2$-metric}

\subsection{Definition and covariant derivative}

Finally we arrive at the $L^2$-metric on $C^\infty(M,N)$. The metric is defined by
\begin{equation}
\label{eq:l2_metric}
G_q(h, k) = \int_M g_{q(x)}(h(x), k(x)) \,\mu(x)\,.
\end{equation}
The following theorem summarizes the properties of $G$.

\begin{theorem}
\label{thm:ebin1970}
Let $G$ be the $L^2$-metric on $C^\infty(M,N)$ defined by \eqref{eq:l2_metric}.
\begin{enumerate}
\item
$G$ defines a smooth weak Riemannian metric on $C^\infty(M,N)$.
\item
The Levi-Civita covariant derivative of $G$ coincides with the covariant derivative described in \autoref{ss:cmn_covariant_derivative}.
\end{enumerate}
\end{theorem}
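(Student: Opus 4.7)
The plan is to prove both claims by reducing to pointwise properties on $N$, using the functorial setup from \autoref{sec:cmn_manifold} and the isometric embedding strategy outlined in the introduction for part (2). For (1), smoothness of $G$ follows because $g : TN \x_N TN \to \R$ is a smooth map that induces by left-composition a smooth map $L_g : C^\infty(M, TN \x_N TN) \to C^\infty(M,\R)$; composing with the canonical isomorphism $TC^\infty(M,N) \x_{C^\infty(M,N)} TC^\infty(M,N) \cong C^\infty(M, TN \x_N TN)$ and with integration $\int_M\,\cdot\,\mu$, a continuous linear functional on $C^\infty(M,\R)$, shows that $G$ is smooth in the convenient calculus sense. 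The metric is only weak because, after embedding $N \hookrightarrow \R^d$, the $L^2$-norm on sections is dominated by any $C^k$-norm on the compact manifold $M$, so it induces a strictly weaker topology than the Fr\'echet manifold topology on $T_q C^\infty(M,N)$. Non-degeneracy of each $G_q$ is immediate from non-degeneracy of $g$ combined with a bump-function argument.

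For (2), since the Levi-Civita connection of a weak Riemannian metric, when it exists, is uniquely determined by being torsion-free and metric-compatible (via the Koszul formula), it suffices to verify these two properties for the connection $\nabla$ from \autoref{ss:cmn_covariant_derivative}. I would first dispose of the flat case $N = \R^d$: here $\nabla^g$ is the trivial derivative, and unwinding the construction of \autoref{ss:cmn_covariant_derivative} shows that the induced $\nabla$ on $C^\infty(M,\R^d)$ is simply $\nabla_X Y = DY \cdot X$, the directional derivative on the convenient vector space. Torsion-freeness then reduces to the identity $DY \cdot X - DX \cdot Y = [X,Y]$ on $C^\infty(M,\R^d)$, and metric-compatibility $X \cdot G(Y,Z) = G(\nabla_X Y, Z) + G(Y, \nabla_X Z)$ follows by differentiating under the integral sign.

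For a general target $(N,g)$, I would invoke Nash's embedding theorem to realize $N$ as an isometric submanifold of $(\R^d, \langle\cdot,\cdot\rangle)$. By the proposition at the end of \autoref{sec:cmn_manifold}, $C^\infty(M,N)$ is then a splitting submanifold of $C^\infty(M,\R^d)$, and the $L^2$-metric on $C^\infty(M,N)$ is the restriction of the ambient one. The key computation is that the $L^2$-orthogonal complement of $T_q C^\infty(M,N) = C^\infty(M, q^\ast TN)$ inside $T_q C^\infty(M,\R^d)$ consists exactly of those $k$ with $k(x) \perp T_{q(x)} N$ at every $x \in M$: the nontrivial direction uses bump functions on $M$ to promote $L^2$-orthogonality to pointwise orthogonality. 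Consequently $G$-orthogonal projection onto $TC^\infty(M,N)$ is pointwise orthogonal projection, and the tangential part of the ambient trivial connection is pointwise the Levi-Civita connection of $(N,g)$, which by the results of \autoref{ss:cmn_covariant_derivative} is exactly $\nabla$.

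The main obstacle will be making the Riemannian submanifold argument rigorous in the weak setting: although the pointwise orthogonal projection $\R^d \to T_{q(x)} N$ defines a smooth bundle map, the $L^2$-orthogonal splitting of $T_q C^\infty(M,\R^d)$ is not a topological direct sum of closed subspaces in the Fr\'echet topology. One must therefore verify directly that the tangential component of the ambient flat connection, obtained fibrewise, is both torsion-free and $G$-compatible on $C^\infty(M,N)$, and that the normal component is indeed $G$-orthogonal to all vector fields tangent to $C^\infty(M,N)$. Once these weak versions of the Gauss formula are in place, Koszul's identity uniquely identifies $\nabla$ with the Levi-Civita connection of $G$, completing the proof.
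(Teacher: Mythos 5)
Your proposal is correct and follows essentially the same route as the paper: smoothness via convenient calculus, identification of the Levi-Civita connection first on the flat space $C^\infty(M,\R^d)$, and then transfer to general $N$ by Nash embedding together with the pointwise $G$-orthogonal projection. The only (immaterial) difference is that in the vector-space case the paper computes the Christoffel symbols of $G$ from those of $g$ for an arbitrary metric on $\R^d$, whereas you verify torsion-freeness and metric compatibility of the trivial connection directly and then invoke uniqueness of the Levi-Civita connection for weak metrics; both arguments are sound.
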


\begin{proof} 
\begin{proofsteps}
\item
The $L^2$-metric is smooth because smooth curves are mapped to smooth curves. This is one of the main principles of convenient calculus.

\item
Let $\nabla$ be the covariant derivative from \autoref{ss:cmn_covariant_derivative}. To identify the Levi-Civita covariant derivative, we first consider the case $N=\R^d$, not necesserily with the Euclidean metric. Then $C^\infty(M,\R^d)$ is a vector space and we can use coordinate formulas for the Christoffel symbols. Note that the identity
\[
D_{q,m} G_\cdot(h,k) = \int_M D_{q(x),m(x)} g_\cdot(h(x),k(x)) \,\mu(x)\,,
\]
shows that the directional derivative of the metric $G$ is given by the integral over the pointwise directional derivatives of the finite-dimensional metric $g$. Therefore
\begin{align*}
 \frac 12 \big( D_{q,m} G_\cdot(h,k) - D_{q,h} G_\cdot(k,m) - D_{q,k} G_\cdot(m,h)\big)
 &= \int_M g_{q(x)}\left(\Ga^g_{q(x)}(h(x),k(x)),m(x) \right) \,\mu(x)\,,
\end{align*}
which shows that the Christoffel symbols of $G$ exist and are given pointwise by those of $g$,
\[
\Ga^G_q(h,k)(x) = \Ga^g_{q(x)}(h(x),k(x))\,,
\]
and hence we can identify the connector $K^G$ of the Levi-Civita covariant derivative $\nabla^G$,
\[
K^G(\xi)(x) = K^g(\xi(x)) = K(\xi)(x)\,,
\]
where $K$ is the connector of $\nabla$. Thus $\nabla^G = \nabla$ when $N = \R^d$.
\item
We embed $(N,g)$ isometrically as a submanifold of $(\R^d,\bar g)$ for some $d$, where $\bar g$ is the standard Riemannian metric. Then $C^\infty(M,N)$ with the $L^2$-metric is an isometric submanifold of $C^\infty(M,\R^d)$ with the $L^2$-metric. Let $P^g : T\R^d|_N \to TN$ be the orthogonal projection. Then
\[
TC^\infty(M,\R^d)|_{C^\infty(M,N)} \cong C^\infty(M,T\R^d|_N)
\]
and the $G$-orthogonal projection exists and is given by
\[
P : C^\infty(M,T\R^d|_N) \to C^\infty(M,TN)\,,\quad
P(h) = P^g \circ h\,.
\]
Hence the Levi-Civita covariant derivative on $C^\infty(M,N)$ exists and its connector $K$ is given by
\[
K = P \circ \bar K(\xi) = P^g \circ K^{\bar g} \circ \xi = K^g \circ \xi\,,
\]
where $\bar K$ is the connector of $\nabla$ on $C^\infty(M,\R^d)$ and $K^g$ is the connector of $(N,g)$.\qedhere
\end{proofsteps}
\end{proof}

\subsection{Reparametrization invariance}

The $L^2$-metric is \emph{not} invariant with respect to the whole group $\on{Diff}(M)$. In fact, if $\ph \in \on{Diff}(M)$, then
\begin{align*}
G_{q\circ \ph}(h\circ \ph,k\circ \ph) &= \int_M g_{q\circ \ph}(h\circ \ph,k\circ \ph)\, \mu
= \int_M \ph^\ast \left(g_q(h,k)\, \ph_\ast\mu\right)
= \int_M g_{q} (h, k)\, \ph_\ast \mu\,.
\end{align*}
Thus $\ph$ leaves $G$ invariant if and only if $\ph_\ast \mu = \mu$.
Note that in local coordinates,
\[
\left(\ph_\ast \mu\right) = \ph_\ast\left(\rh\, dx^1 \wedge \dots \wedge dx^n\right)
= \rh \circ \ph\inv \left(\det D\ph\inv\right) \, dx^1 \wedge \dots \wedge dx^n\,.
\]
Let
\[
\on{Diff}_\mu(M) = \left\{ \ph \in \on{Diff}(M) \,:\, \ph^\ast \mu = \mu \right\}\,,
\]
be the subgroup of diffeomorphisms preserving $\mu$. Then the $L^2$-metric $G$ is right-invariant with respect to $\on{Diff}_\mu(M)$. However, we obtain invariance with respect to the whole group for several objects associated to the metric.

\begin{proposition}
\label{prop:l2_invariance}
Let $G$ be the $L^2$-metric on $C^\infty(M,N)$ defined by \eqref{eq:l2_metric}. Then the following are $\on{Diff}(M)$-equivariant:
\begin{enumerate}
\item
The connector and the linear connection of the Levi-Civita metric.
\item
The geodesic spray and the exponential map.
\item
The curvature tensor.
\end{enumerate}
\end{proposition}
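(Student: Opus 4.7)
The plan is to reduce every claim to the pointwise formulas established in \autoref{ss:cmn_covariant_derivative} and the preceding propositions. For $\ph\in\on{Diff}(M)$ the right action on $C^\infty(M,N)$ is $R_\ph(q)=q\circ\ph$, and because the functor $C^\infty(M,\cdot)$ commutes with the tangent bundle functor, the induced actions on $TC^\infty(M,N)=C^\infty(M,TN)$ and $TTC^\infty(M,N)=C^\infty(M,TTN)$ are again given by right composition with $\ph$, i.e.\ $TR_\ph(h)=h\circ\ph$ and $TTR_\ph(\xi)=\xi\circ\ph$. So ``$\on{Diff}(M)$-equivariance'' of an object like $K$ means the identity $K(\xi\circ\ph)=K(\xi)\circ\ph$, and analogously for the other objects.

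First I would dispose of the connector and the linear connection. From \autoref{thm:ebin1970} the connector of the Levi-Civita covariant derivative $\nabla^G$ equals the pointwise connector $K(\xi)=K^g\circ\xi$. Hence
\[
K(\xi\circ\ph)=K^g\circ(\xi\circ\ph)=(K^g\circ\xi)\circ\ph=K(\xi)\circ\ph\,,
\]
which is exactly the required equivariance; the same one-line argument, using $C(h,k)=C^g\circ(h,k)$, handles the linear connection.

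Next I would treat the spray and the exponential map. From part~\ref{thm:ebin1970:spray} we have $\Xi(h)=\Xi^g\circ h$, so
\[
\Xi(h\circ\ph)=\Xi^g\circ h\circ\ph=\Xi(h)\circ\ph\,,
\]
and part~\ref{thm:ebin1970:exp} together with the equality $\exp(h)=\exp^g\circ h$ gives in the same way $\exp(h\circ\ph)=\exp(h)\circ\ph$ on the (open, $\on{Diff}(M)$-invariant) domain $\mc U=C^\infty(M,U)$. For the curvature tensor I would invoke the pointwise formula $R(X,Y,Z)=R^g\circ(X,Y,Z)$ from the last proposition and compute
\[
R(X\circ\ph,\,Y\circ\ph,\,Z\circ\ph)=R^g\circ(X\circ\ph,Y\circ\ph,Z\circ\ph)=\bigl(R^g\circ(X,Y,Z)\bigr)\circ\ph\,,
\]
which is the required equivariance.

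There is really no hard step: the substance of the proposition is concentrated in the pointwise formulas already proved, and equivariance of each object is a single application of the associativity of composition. The only point that needs a brief mention is that the action of $\on{Diff}(M)$ lifts correctly through the iterated tangent functor, which is immediate from $TL_\ph=L_{T\ph}$ applied to $\ph$ replaced by right composition. If anything, the mildest subtlety is verifying that the domain $\mc U$ of $\exp$ is preserved by $R_\ph$, which follows because $U\subseteq TN$ is fixed and $R_\ph$ acts only on the $M$-factor.
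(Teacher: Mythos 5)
Your proposal is correct and follows essentially the same route as the paper: the paper's proof likewise writes $K(\xi\circ\ph)=K^g\circ(\xi\circ\ph)=(K^g\circ\xi)\circ\ph=K(\xi)\circ\ph$ using the pointwise formula for the connector and then notes that the argument for the other objects is identical. Your additional remarks on how the $\on{Diff}(M)$-action lifts through the tangent functors and on the invariance of the domain $\mc U$ are fine but not needed beyond what the paper records.
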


\begin{proof}
The connector $K$ of the $L^2$-metric is given by $K(\xi) = K^g \circ \xi$, where $K^g$ is the connector of the metric $(N,g)$. Then we have for $\ph \in \on{Diff}(M)$,
\[
K(\xi \circ \ph) = K^g \circ (\xi \circ \ph) = \big( K^g \circ \xi \big) \circ \ph
= K(\xi) \circ \ph\,.
\]
Hence the connector is $\on{Diff}(M)$-equivariant. The proof for the other maps is the same.
\end{proof}

Another way to view this theorem is by observing none of these objects, which are derived from the Levi-Civita covariant derivative, depend on the volume form $\mu$. In fact, the covariant derivative was defined in \autoref{ss:cmn_covariant_derivative} before we introduced the $L^2$-metric itself. This means that the Levi-Civita covariant derivative $\nabla$ remains unchanged if we change the volume form $\mu$. Since the $L^2$-metric is invariant under $\on{Diff}_\mu(M)$, we obtain equivariance of the Levi-Civita covariant derivative under all diffeomorphisms that preserve \emph{some} volume form. Thus leads to the following question:

\begin{question}
Does every element $\ph \in \on{Diff}(M)$ leave some volume form invariant?
\end{question}

If the answer to this question is `yes', then we can prove \autoref{prop:l2_invariance} without having to resort to explicit formulas for the connector and the other maps.

\subsection{Other spaces of maps}

In place of the space $C^\infty(M,N)$ we could also consider the space $H^s(M,N)$ of maps of a given Sobolev regularity $s > \dim M / 2$ or the space $C^k(M,N)$ of maps with a finite number of derivatives. The $L^2$-metric,
\begin{equation*}
G_q(h, k) = \int_M g_{q(x)}(h(x), k(x)) \,\mu(x)\,,
\end{equation*}
extends smoothly to these spaces. The smoothness of $G$ follows from the fact that composition from the left with smooth functions is smooth, i.e., the map $q \mapsto g_q$, and that pointwise multiplication is a smooth bilinear map. 

To calculate the Levi-Civita covariant derivative, the spray and the curvature we could carefully redo the proofs from the previous sections on the larger spaces. Following this path one would encounter some difficulties, because a vector field $X$ on $H^s(M,N)$ now is a map
\[
X \in C^\infty\big( H^s(M,N), H^s(M,TN) \big)
\]
and $X^\wedge : H^s(M,N) \x M \to TN$ is now a map with mixed regularity: $C^\infty(M,N)$ in the first component and $H^s$ in the second. Identifying the precise regularity class of $X^\wedge$ is a nontrivial task. This has been done in some cases \cite{Michor2013b}. Here we shall follow a different path.

First, some good news. The correspondences
\[
N \rightsquigarrow H^s(M,N)
\qquad\text{and}\qquad
N \rightsquigarrow C^k(M,N)
\]
are also functorial in nature and everything in \autoref{sec:cmn_manifold} remains valid for both $H^s(M,N)$ and $C^k(M,N)$. This follows directly from the construction of charts on these spaces and because left-composition with smooth maps is a smooth map.

\begin{proposition}
\label{prop:l2_hs}
Let $G$ be the $L^2$-metric on $H^s(M,N)$ with $s > \dim M / 2$ defined by~\eqref{eq:l2_metric}.
\begin{enumerate}
\item
$G$ defines a smooth weak Riemannian metric on $H^s(M,N)$.
\item
The Levi-Civita covariant derivative $\nabla$ of $G$ exists and its connector is given by
\[
K : H^s(M,TTN) \to H^s(M,TN)\,,\quad K(\xi) = K^g \circ \xi\,.
\]
\item
The geodesic spray is given by
\[
\Xi : H^s(M,TN) \to H^s(M,TTN)\,,\quad \Xi(h) = \Xi^g \circ h\,.
\]
\item
The exponential map is defined on a neighborhood $\mc U \subseteq H^s(M,TN)$ of the zero-section and given by
\[
\on{exp} : \mc U \to H^s(M,N)\,,\quad \on{exp}(h) = \on{exp}^g \circ h\,.
\]
\item
The curvature tensor is given by
\[
R : H^s(M,TN) \x H^s(M,TN) \x H^s(M,TN) \to H^s(M,TN)\,,\quad
(X,Y,Z) \mapsto R^g \circ (X,Y,Z)\,,
\]
\end{enumerate}
\end{proposition}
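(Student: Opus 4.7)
The plan is to mirror the proof of \autoref{thm:ebin1970} in the $H^s$ category, using the isometric embedding into Euclidean space as the main reduction; crucially, this avoids the mixed-regularity issues with $X^\wedge$ flagged in the text. The workhorse throughout is the $\om$-lemma: for $s > \dim M / 2$ and any smooth map $f : N \to P$ between finite-dimensional manifolds, left-composition $L_f : H^s(M,N) \to H^s(M,P)$ is a smooth map of Banach manifolds. This is what enables all the pointwise constructions to survive the weakening of regularity, and it is how the author has already justified that the functorial identities of \autoref{sec:cmn_manifold} transfer to $H^s(M,N)$.

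Smoothness of $G$ follows by writing $q \mapsto g_q$ as a left-composition with the smooth section $g : N \to S^2 T^\ast N$, smooth by the $\om$-lemma, followed by the pointwise bilinear contraction with $h$ and $k$ and integration against $\mu$; the latter is continuous by the Sobolev embedding $H^s \hookrightarrow C^0$ for $s > \dim M / 2$. For the Levi-Civita connection I would first handle the flat case $N = \R^d$ with an arbitrary (not necessarily Euclidean) metric $g$, where $H^s(M,\R^d)$ is now a Hilbert manifold. The directional-derivative computation of Step~2 in the proof of \autoref{thm:ebin1970} goes through verbatim, yielding pointwise Christoffel symbols $\Ga^G_q(h,k)(x) = \Ga^g_{q(x)}(h(x),k(x))$ and thus the connector $K^G(\xi) = K^g \circ \xi$.

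For arbitrary $N$, use Nash to embed $(N,g) \hookrightarrow (\R^d,\bar g)$ isometrically. The splitting-submanifold statement of \autoref{sec:cmn_manifold} extends to the Sobolev category (the tubular-neighborhood identifications are themselves left-compositions with smooth maps, so the $\om$-lemma provides the chart change), so $H^s(M,N)$ is a splitting submanifold of $H^s(M,\R^d)$ and the $L^2$-metric restricts isometrically. The pointwise orthogonal projector $P^g : T\R^d|_N \to TN$ is a smooth bundle map, hence $L_{P^g}$ is smooth and realizes the $G$-orthogonal projector onto the tangent bundle of the submanifold. The Levi-Civita connection on $H^s(M,N)$ therefore exists and has connector $K = L_{P^g} \circ L_{K^{\bar g}} = L_{K^g}$. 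The explicit formulas for the spray $\Xi = L_{\Xi^g}$, the curvature $R = L_{R^g}$, and the exponential map $\on{exp} = L_{\on{exp}^g}$ are then obtained by the same arguments as in the $C^\infty$ case (which used only the pointwise action of $K$); smoothness of $\on{exp}$ on a neighborhood of the zero section follows from the standard local existence theorem for the flow of a smooth second-order vector field on a Banach manifold applied to the spray.

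The main obstacle is the regularity bookkeeping behind each invocation of the $\om$-lemma. One has to check that feeding an $H^s$ section into compositions involving $TTN$ and $TT\R^d$, and then projecting, returns an $H^s$ section depending smoothly on the input; this is exactly where $s > \dim M / 2$ enters, and is the step that needs care rather than a rote repeat of the $C^\infty$ argument. A secondary but related point is verifying that $H^s(M,N) \subseteq H^s(M,\R^d)$ is a splitting submanifold and that the submanifold formula $\nabla^{\text{sub}} = P \circ \nabla^{\text{amb}}$ carries over to weak Riemannian metrics on Banach manifolds once the orthogonal projector is available as a smooth bundle endomorphism; both are standard but are where one genuinely uses the ambient Hilbert-manifold structure.
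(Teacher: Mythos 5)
Your proposal is correct in outline but takes a genuinely different route from the paper. The paper does \emph{not} redo the $C^\infty$ proof in the Sobolev category; it explicitly declines to do so (citing the mixed-regularity problem for $X^\wedge$) and instead uses a density argument: the connector axioms, the symmetry condition $K \circ \ka = K$, the metric-compatibility identity $\on{pr}_2 \circ TG = G \circ (K \x \on{Id} + \on{Id} \x K)$, the defining equation of the linear connection, and the local-coordinate formula for $R$ in terms of Christoffel symbols are all equalities of smooth maps already verified on $C^\infty(M,\cdot)$, which is dense in $H^s(M,\cdot)$, so they extend to the whole space. Your route instead repeats Steps 2--3 of the proof of \autoref{thm:ebin1970} directly on the Hilbert manifold $H^s(M,\R^d)$ (Christoffel symbols from the first variation of $G$, then Nash embedding and the pointwise orthogonal projector). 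This is legitimate and arguably more self-contained --- it does not presuppose the $C^\infty$ results or the continuity of all the maps involved --- at the cost of having to justify each invocation of the $\omega$-lemma and the splitting-submanifold statement in the Sobolev category, which you correctly identify as the real work.

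The one step you should not wave through is the curvature tensor. You assert that $R = L_{R^g}$ follows ``by the same arguments as in the $C^\infty$ case (which used only the pointwise action of $K$)'', but the paper's $C^\infty$ curvature proof is precisely the one that does \emph{not} act pointwise: it passes to $Z^\wedge \in C^\infty(C^\infty(M,N) \x M, TN)$ and the vector fields $(X,0_M)$ on the product, i.e.\ exactly the wedge construction whose $H^s$ analogue has the mixed-regularity problem you set out to avoid. To close this, either use the local-coordinate expression
\[
R_q(h,k,l) = \Ga_q(h,\Ga_q(k,l)) - \Ga_q(k,\Ga_q(h,l)) + d\Ga(h,l)(q).k - d\Ga(k,l)(q).h\,,
\]
which is manifestly pointwise once the Christoffel symbols are known to be $\Ga^G_q(h,k)(x) = \Ga^g_{q(x)}(h(x),k(x))$, or fall back on the paper's density argument for this item alone. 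With that repair the proposal is sound.
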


\begin{proof}
\begin{proofsteps}
\item
The map $K$, given by $K(\xi) = K^g \circ \xi = L_{K^g}(\xi)$ is well-defined and smooth. Following~\cite{Michor2008b} a smooth map $K : TT\mc M \to T\mc M$ is the connector of a connection if it satisfies the following three properties:
\begin{enumerate}
\item
$K \circ \on{vl}_{M} = \on{pr}_2 : TM \x_M TM \to TM$, where $\on{vl}_M$ is the vertical lift.
\item
$K$is linear for the first vector bundle structure on $TTM$, $\pi_{TM} : TTM \to TM$.
\item
$K$ is linear for the second vector bundle structure on $TTM$, $T\pi_M : TTM \to TM$.
\end{enumerate}
To be symmetric $K$ additionally has to satisfy $K \circ \ka_M = K$. Because we have already seen that $K$ satisfies these identities when restricted to $C^\infty(M,TTN)$ and because $C^\infty(M,TTN)$ is dense in $H^s(M,TTN)$ it follows that $K$ is the connector of a symmetric connection $\nabla$ on $H^s(M,N)$. 

It remains to show that $\nabla$ is compatible with the Riemannian metric. The compatibility condition that a Levi-Civita connection satisfies is,
\[
X\big( G(Y,Z) \big) = G( \nabla_X Y, Z ) + G( Y, \nabla_X Z)\,,
\]
with vector fields $X,Y,Z$ and it can be expressed in terms of the connector as
\begin{align*}
\on{pr}_2 \circ TG \circ (TY, TZ) \circ X &= G(K \circ TY \circ X, Z) + G(Y,K \circ TZ \circ X) \\
&= G \circ \left( K \x \on{Id}_{TM} + \on{Id}_{TM} \x K \right) \circ (TY, TZ) \circ X\,;
\end{align*}
here $\on{pr}_2 : T\R \cong \R \x \R \to \R$ is the projection onto the second component. Thus the compatibility condition for the connector is
\begin{align*}
\on{pr}_2 \circ TG 
= G \circ \left( K \x \on{Id}_{TM} + \on{Id}_{TM} \x K \right)\,.
\end{align*}
Again, using the density argument we can conclude that $\nabla$ is the Levi-Civita connection of the $L^2$-metric on $H^s(M,N)$.

\item
The geodesic spray is given by $\Xi(h) = C(h,h)$, where $C$ is the linear connection of the Levi-Civita derivative. The linear connection $C = L_{C^g}$ is characterized by the equation
\[
K(\xi) = \on{vpr}_{C^\infty}\big( \xi - C(T\pi_{C^\infty}.\xi, \pi_{TC^\infty}(\xi))\big)\,,
\]
and we can use the density argument to show that this identity extends from $C^\infty(M,TTN)$ to $H^s(M,TTN)$. This proves the formula for the geodesic spray. That $\on{exp} = L_{\on{exp}^g}$ is the $L^2$-exponential map follows by direct verification.

\item
For the curvature tensor we also use the density argument, this time in coordinate charts. In local coordinates the curvature is given by
\[
R_q(h,k,l) = \Ga_q(h,\Ga_q(k,l)) - \Ga_q(k, \Ga_q(h,l)) + d\Ga(h,l)(q).k - d\Ga(k,l)(q).h\,,
\]
and we have already shown that the Christoffel symbols on $C^\infty(M,N)$ and $H^s(M,N)$ coincide. Thus we obtain that the identity $R = L_{R^g}$ for the curvature tensor on $H^s(M,TN)$ is valid for tangent vectors in $C^\infty(M,TN)$ and hence by density for all tangent vectors. \qedhere
\end{proofsteps}
\end{proof}

The proof of \autoref{prop:l2_hs} can be repeated verbatim with the spaces $H^s(M,N)$ replaced by $C^k(M,N)$. By doing so we obtain the following proposition.

\begin{proposition}
The statements of \autoref{prop:l2_hs} also hold for the $L^2$-metric on the spaces $C^k(M,N)$ with $k \in \mb N_0$.
\end{proposition}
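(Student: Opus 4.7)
The plan is to repeat the proof of \autoref{prop:l2_hs} step by step, verifying at each point that the replacement of $H^s(M,N)$ by $C^k(M,N)$ preserves every ingredient used. The proof of \autoref{prop:l2_hs} rests on two facts: that left-composition with a smooth map between finite-dimensional manifolds induces a smooth operation between the corresponding spaces of maps, and that $C^\infty(M,\cdot)$ sits densely inside the larger space of maps. Both facts hold equally for $C^k(M,\cdot)$, so the structure of the argument transfers unchanged.

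Concretely, I would proceed as follows. For part (1), smoothness of $G$ on $C^k(M,N)$ follows from smoothness of $q \mapsto g_q$ (left-composition with the smooth bundle map $g$) and smoothness of pointwise pairing followed by integration against $\mu$, both already available in the $C^k$-category. For parts (2), (3) and (4), the maps $K = L_{K^g}$, $\Xi = L_{\Xi^g}$, and $\on{exp} = L_{\on{exp}^g}$ are smooth between the appropriate $C^k$-spaces by the left-composition principle; each defining identity — the connector axioms, symmetry $K \circ \ka = K$, compatibility $\on{pr}_2 \circ TG = G \circ (K \x \on{Id} + \on{Id} \x K)$, and the characterization of $\Xi$ via the linear connection — is a continuous identity on $C^k(M,TTN)$ that already holds on $C^\infty(M,TTN)$, and hence extends by density. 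For part (5), the coordinate formula for the curvature is a continuous polynomial expression in the Christoffel symbols and their first derivatives, which act pointwise, so the identity $R = L_{R^g}$ descends from $C^\infty$ to $C^k$ in each chart by the same density argument.

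The only substantive point requiring verification is that $C^\infty(M,E)$ is dense in $C^k(M,E)$ in the Banach-manifold topology, for the relevant vector bundles $E \to M$ (namely $TN$, $TTN$, and products thereof restricted along maps). On the compact manifold $M$ this is standard: one mollifies $C^k$-sections in local trivializations and glues via a partition of unity, obtaining $C^\infty$ approximations that converge in the $C^k$-norm. This replaces the use of Sobolev embedding in the proof of \autoref{prop:l2_hs}, and is the main (but minor) obstacle; once it is granted, the rest of the argument is purely formal and is exactly the argument already written down for $H^s(M,N)$.
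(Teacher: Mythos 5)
Your proposal is correct and follows essentially the same route as the paper, whose proof consists precisely of the remark that the argument for \autoref{prop:l2_hs} can be repeated verbatim with $H^s(M,N)$ replaced by $C^k(M,N)$. You additionally make explicit the one ingredient that must be re-checked in the $C^k$-setting, namely the density of $C^\infty$-maps in the $C^k$-topology via mollification, which is a worthwhile clarification but not a departure from the paper's approach.
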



\subsection*{Acknowledgements}

I would like to thank Peter Michor and Stephen Marsland for helpful discussions and their gentle encouragement. This work benefited from the support and hospitality of the Isaac Newton Institute for Mathematical Sciences during the programme `Growth form and self-organisation' and as such this work was supported by: EPSRC grant number EP/K032208/1.

\addcontentsline{toc}{chapter}{Bibliography}
\printbibliography

\end{document}